\newtheorem{theorem}{Theorem}
\theoremstyle{plain}
\newtheorem{corollary}{Corollary}
\newtheorem{definition}{Definition}
\newtheorem{remark}{Remark}
\numberwithin{equation}{section}
\begin{document}
\title[On Hadamard Type Inequalities]{On Hadamard Type Inequalities
Involving Several Kind of Convexity }
\author{Erhan SET$^{\star \clubsuit }$}
\address{$^{\clubsuit }$Atat\"{u}rk University, K.K. Education Faculty,
Department of Mathematics, 25240, Campus, Erzurum, Turkey}
\email{erhanset@yahoo.com}
\thanks{$^{\star }$corresponding author}
\author{M. Emin \"{O}zdemir$^{\blacksquare }$}
\address{$^{\blacksquare }$Graduate School of Natural and Applied Sciences, A%
\u{g}r\i\ \.{I}brahim \c{C}e\c{c}en University, A\u{g}r\i , Turkey}
\email{emos@atauni.edu.tr}
\author{Sever S. Dragomir$^{\blacktriangle }$}
\address{$^{\blacktriangle }$Research Group in Mathematical Inequalities \&
Applications\\
School of Engineering \& Science\\
Victoria University, PO Box 14428\\
Melbourne City, MC 8001, Australia.}
\email{sever.dragomir@vu.edu.au}
\urladdr{http://www.staff.vu.edu.au/rgmia/dragomir/}
\subjclass[2000]{ 26A51, 26D07, 26D15}
\keywords{$log$-convex functions, $m-$convex functions, $\left( \alpha
,m\right) -$convex functions, Hadamard's inequality.}

\begin{abstract}
In this paper, we not only give the extensions of the results given in \cite%
{GP} by Gill et al. for log-convex functions, but also obtain some new
Hadamard type inequalities for log-convex, $m-$convex and $\left( \alpha
,m\right) $-convex functions.
\end{abstract}

\maketitle

\section{Introduction}

The following inequality is well known in the literature as Hadamard's
inequality:%
\begin{equation}
f\left( \frac{a+b}{2}\right) \leq \frac{1}{b-a}\int_{a}^{b}f\left( x\right)
dx\leq \frac{f\left( a\right) +f\left( b\right) }{2},  \label{e.0}
\end{equation}%
where $f:I\rightarrow R$ is a convex function on the interval $I$ of real
numbers and $a,b\in I$ with $a<b.$ This inequality is one of the most useful
inequalities in mathematical analysis. For new proofs, note worthy
extension, generalizations and numerous applications on this inequality, see
(\cite{AD}, \cite{MCXHZ}, \cite{CD}, \cite{DP}, \cite{MPF}, \cite{SOD})
where further references are given.

Let $I$ be on interval in $R$. Then $f:I\rightarrow R$ is said to be convex
if for all $x,y\in I$ and $\lambda \in \left[ 0,1\right] $,%
\begin{equation*}
f\left( \lambda x+\left( 1-\lambda \right) y\right) \leq \lambda f\left(
x\right) +\left( 1-\lambda \right) f\left( y\right)
\end{equation*}%
(see \cite[P.1]{MPF}). Geometrically, this means that if $K,L$ and $M$ are
three distinct points on the graph of $f$ with $L$ between $K$ and $M$, then 
$L$ is on or below chord $KM.$

Recall that a function $f:I\rightarrow \left( 0,\infty \right) $ is said to
be log-convex function, if for all $x,y\in I$ and $t\in \left[ 0,1\right] $,
one has the inequality (see \cite[P.3]{MPF})%
\begin{equation}
f\left( tx+\left( 1-t\right) y\right) \leq \left[ f\left( x\right) \right]
^{t}\left[ f\left( y\right) \right] ^{\left( 1-t\right) }.  \label{e.1}
\end{equation}%
It is said to be log-concave if the inequality in (\ref{e.1}) is reversed.

In \cite{GT}, G. Toader defined $m$-convexity as follows:

\begin{definition}
\label{d1} The function $f:\left[ 0,b\right] \rightarrow R$, $b>0$ is said
to be $m$-convex, where $m\in \left[ 0,1\right] $, if we have 
\begin{equation*}
f\left( tx+m\left( 1-t\right) y\right) \leq tf\left( x\right) +m\left(
1-t\right) f\left( y\right)
\end{equation*}%
for all $x,y\in \left[ 0,b\right] $ and $t\in \left[ 0,1\right] .$We say
that $f$ is $m-$concave if $-f$ is $m-$convex.
\end{definition}

Denote by $K_{m}\left( b\right) $ the class of all $m-$convex functions on $%
\left[ 0,b\right] $ for which $f\left( 0\right) \leq 0.$ Obviously, if we
choose $m=1,$ definition (\ref{d1}) recaptures the concept of standard
convex functions on $\left[ 0,b\right] .$

In \cite{VGM} , V. G. Mihe\c{s}an defined $\left( \alpha ,m\right) -$
convexity as in the following:

\begin{definition}
\label{d.1.2} The function $f:\left[ 0,b\right] \rightarrow 
\mathbb{R}
$ , $b>0$, is said to be $\left( \alpha ,m\right) -$ convex, where $\left(
\alpha ,m\right) \in \left[ 0,1\right] ^{2}$, if we have%
\begin{equation*}
f(tx+m(1-t)y)\leq t^{\alpha }f(x)+m(1-t^{\alpha })f(y)
\end{equation*}%
for all $x,y\in \left[ 0,b\right] $ and $t\in \left[ 0,1\right] $.
\end{definition}

Denote by $K_{m}^{\alpha }(b)$ the class of all $\left( \alpha ,m\right) -$%
convex functions on $\left[ 0,b\right] $ for which $f(0)\leq 0$. It can be
easily seen that for $\left( \alpha ,m\right) =\left( 1,m\right) ,$ $\left(
\alpha ,m\right) -$ convexity reduces to $m-$ convexity and for $\left(
\alpha ,m\right) =\left( 1,1\right) $, $\left( \alpha ,m\right) -$ convexity
reduces to the concept of usual convexity defined on $\left[ 0,b\right] $ , $%
b>0$.

For recent results and generalizations concerning $m-$convex and $\left(
\alpha ,m\right) -$ convex functions, see (\cite{BOP}, \cite{BPR}, \cite{OAS}%
).

In \cite{GP}, P.M. Gill et al. established the following results:

\begin{theorem}
\label{A} Let $f$ be a positive, $\log $-convex function on $\left[ a,b%
\right] $. Then%
\begin{equation}
\frac{1}{b-a}\int_{a}^{b}f\left( t\right) dt\leq L\left( f\left( a\right)
,f\left( b\right) \right)  \label{e.2}
\end{equation}%
where 
\begin{equation*}
L\left( p,q\right) =\frac{p-q}{\ln p-\ln q}\;~\;\;(p\neq q)
\end{equation*}%
is the Logarithmic mean of the positive real numbers $p,q$ \ (for $p=q$, we
put $L\left( p,p\right) =p$).

For $f$ a positive $\log $-concave function, the inequality is reversed.
\end{theorem}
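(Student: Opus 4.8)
The plan is to use the defining inequality (\ref{e.1}) to majorize $f$ pointwise on $\left[ a,b\right] $ by an exponential function of a convex-combination parameter, and then to integrate this bound explicitly; the integral of the exponential will reproduce exactly the logarithmic mean $L\left( f\left( a\right) ,f\left( b\right) \right) $.

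First I would write an arbitrary point of $\left[ a,b\right] $ as a convex combination of the endpoints. For $\lambda \in \left[ 0,1\right] $ set $x=\lambda a+\left( 1-\lambda \right) b$; then log-convexity (\ref{e.1}) gives at once $f\left( \lambda a+\left( 1-\lambda \right) b\right) \leq \left[ f\left( a\right) \right] ^{\lambda }\left[ f\left( b\right) \right] ^{1-\lambda }$. Next I would perform the change of variable $x=\lambda a+\left( 1-\lambda \right) b$ in $\int_{a}^{b}f\left( x\right) dx$. Since $dx=\left( a-b\right) d\lambda $ and the limits $x=a,\,x=b$ correspond to $\lambda =1,\,\lambda =0$, the orientation flip cancels the sign and yields $\frac{1}{b-a}\int_{a}^{b}f\left( x\right) dx=\int_{0}^{1}f\left( \lambda a+\left( 1-\lambda \right) b\right) d\lambda $. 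Combining with the pointwise bound produces $\frac{1}{b-a}\int_{a}^{b}f\left( x\right) dx\leq \int_{0}^{1}\left[ f\left( a\right) \right] ^{\lambda }\left[ f\left( b\right) \right] ^{1-\lambda }d\lambda $.

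It then remains to evaluate the right-hand integral. Writing $\left[ f\left( a\right) \right] ^{\lambda }\left[ f\left( b\right) \right] ^{1-\lambda }=f\left( b\right) \left( f\left( a\right) /f\left( b\right) \right) ^{\lambda }$ and setting $r=f\left( a\right) /f\left( b\right) $, the integral is the elementary $f\left( b\right) \int_{0}^{1}r^{\lambda }d\lambda =f\left( b\right) \frac{r-1}{\ln r}$ (for $r\neq 1$). Substituting $r=f\left( a\right) /f\left( b\right) $ and simplifying collapses this to $\frac{f\left( a\right) -f\left( b\right) }{\ln f\left( a\right) -\ln f\left( b\right) }=L\left( f\left( a\right) ,f\left( b\right) \right) $, which is the claimed bound. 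For the log-concave case the inequality in (\ref{e.1}) reverses, and every subsequent step preserves that reversal, giving the reversed conclusion.

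There is no serious obstacle here: the argument is a direct majorization followed by an exact integration. The only points requiring care are the bookkeeping in the change of variables (the $\lambda $-limits come out reversed, and one must check that the sign of $dx$ compensates) and the degenerate case $f\left( a\right) =f\left( b\right) $, i.e.\ $r=1$, where $\int_{0}^{1}r^{\lambda }d\lambda =1$ and the bound reads $L\left( p,p\right) =p$ in agreement with the stated convention. Positivity of $f$ is used throughout so that the logarithms and the ratio $f\left( a\right) /f\left( b\right) $ are well defined.
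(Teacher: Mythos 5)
Your proof is correct and follows essentially the same route as the paper: the paper establishes this bound (in the generalized form of its Theorem \ref{t1}, which reduces to (\ref{e.2}) for a single function) by exactly this argument — apply the pointwise log-convexity estimate $f\left( ta+\left( 1-t\right) b\right) \leq \left[ f\left( a\right) \right] ^{t}\left[ f\left( b\right) \right] ^{1-t}$, integrate over $t\in \left[ 0,1\right] $, and evaluate $\int_{0}^{1}\left( f\left( a\right) /f\left( b\right) \right) ^{t}dt$ to produce the logarithmic mean. Your handling of the change of variables and the degenerate case $f\left( a\right) =f\left( b\right) $ is sound.
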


\begin{corollary}
\label{c1} Let $f$ be positive $\log $-convex functions on $\left[ a,b\right]
$. Then%
\begin{multline*}
\dfrac{1}{b-a}\int_{a}^{b}f\left( t\right) dt \\
\leq \underset{x\in \left[ a,b\right] }{\min }\dfrac{\left( x-a\right)
L\left( f\left( a\right) ,f\left( x\right) \right) +\left( b-x\right)
L\left( f\left( x\right) ,f\left( b\right) \right) }{b-a}.
\end{multline*}%
If $f$ is a positive $\log $-concave function, then%
\begin{multline*}
\dfrac{1}{b-a}\int_{a}^{b}f\left( x\right) dx \\
\geq \underset{x\in \left[ a,b\right] }{\max }\dfrac{\left( x-a\right)
L\left( f\left( a\right) ,f\left( x\right) \right) +\left( b-x\right)
L\left( f\left( x\right) ,f\left( b\right) \right) }{b-a}.
\end{multline*}
\end{corollary}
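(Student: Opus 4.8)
The plan is to reduce the Corollary to Theorem \ref{A} by splitting the interval of integration at an arbitrary interior point and then optimizing. First I would record the elementary but essential observation that the restriction of a positive $\log$-convex function to any subinterval is again positive and $\log$-convex: the defining inequality (\ref{e.1}) only ever involves convex combinations $tx+(1-t)y$ of two points $x,y$, and such a combination lies in $[a,c]$ (resp.\ $[c,b]$) whenever $x,y$ do, so the property is inherited verbatim on every subinterval of $[a,b]$.

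Next, I would fix an arbitrary $x\in[a,b]$ and write
\[
\int_{a}^{b}f\left( t\right) dt=\int_{a}^{x}f\left( t\right) dt+\int_{x}^{b}f\left( t\right) dt.
\]
Applying Theorem \ref{A} to $f$ on $\left[ a,x\right] $ gives $\int_{a}^{x}f\left( t\right) dt\leq \left( x-a\right) L\left( f\left( a\right) ,f\left( x\right) \right) $, and applying it on $\left[ x,b\right] $ gives $\int_{x}^{b}f\left( t\right) dt\leq \left( b-x\right) L\left( f\left( x\right) ,f\left( b\right) \right) $. Adding these two bounds and dividing by $b-a$ yields, for every $x\in\left[ a,b\right] $,
\[
\frac{1}{b-a}\int_{a}^{b}f\left( t\right) dt\leq \frac{\left( x-a\right) L\left( f\left( a\right) ,f\left( x\right) \right) +\left( b-x\right) L\left( f\left( x\right) ,f\left( b\right) \right) }{b-a}.
\]
Since the left-hand side does not depend on $x$ while the right-hand side does, the inequality survives taking the minimum over $x\in\left[ a,b\right] $, which is the first assertion. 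For the $\log$-concave case every invocation of Theorem \ref{A} reverses, so each of the two subinterval bounds reverses; summing, dividing, and then taking the maximum over $x$ produces the reversed inequality.

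The degenerate endpoints $x=a$ and $x=b$ need a brief comment rather than a separate argument: there one subinterval collapses to a point, both the corresponding integral and its length factor vanish, and with the stated convention $L\left( p,p\right) =p$ the vanishing length factor annihilates the would-be indeterminate term, so the bound persists (and indeed degenerates back into Theorem \ref{A} itself). I expect the only real—and still minor—obstacle to be justifying that the extremum over $x$ is genuinely attained, so that writing $\min$ and $\max$ is legitimate; this follows from the continuity on the compact interval $\left[ a,b\right] $ of the map $x\mapsto \left( x-a\right) L\left( f\left( a\right) ,f\left( x\right) \right) +\left( b-x\right) L\left( f\left( x\right) ,f\left( b\right) \right) $, which rests in turn on the continuity of $f$ (a positive $\log$-convex function is convex after taking logarithms, hence continuous on the interior, and the endpoint contributions are damped by the vanishing length factors).
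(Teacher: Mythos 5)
Your proposal is correct and follows essentially the same route as the paper: the paper proves the generalized version (Corollary \ref{c2}) by exactly this decomposition $\int_{a}^{b}=\int_{a}^{x}+\int_{x}^{b}$, applying the Hadamard-type bound of Theorem \ref{t1} (which reduces to Theorem \ref{A} for a single function) on each subinterval and then taking the extremum over $x$. Your additional remarks on restriction to subintervals, the degenerate endpoints, and attainment of the $\min$/$\max$ are careful touches the paper omits entirely, but they do not change the substance of the argument.
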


For some recent results related to the Hadamard's inequalities involving two 
$\log $-convex functions, see \cite{BGP} and the references cited therein.
The main purpose of this paper is to establish the general version of the
inequalities (\ref{e.2}) and new Hadamard type inequalities involving two $%
\log $-convex functions or two $m$-convex functions or two $\left( \alpha
,m\right) $-convex functions using elementary analysis.

\section{Main Results}

We start with the following Theorem.

\begin{theorem}
\label{t1} Let $f_{i}:I\subset R\rightarrow \left( 0,\infty \right) $ $%
\left( i=1,2,...,n\right) $ be $\log $-convex functions on $I$ and $a,b\in I$
with $a<b$. Then the following inequality holds:%
\begin{equation}
\frac{1}{b-a}\int_{a}^{b}\prod\limits_{i=1}^{n}f_{i}\left( x\right) dx\leq
L\left( \prod\limits_{i=1}^{n}f_{i}\left( a\right)
,\prod\limits_{i=1}^{n}f_{i}\left( b\right) \right)  \label{E3}
\end{equation}%
where $L$ is a logarithmic mean of positive real numbers.

For $f$ a positive $\log $-concave function, the inequality is reversed.
\end{theorem}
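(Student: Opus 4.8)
The plan is to reduce the statement to Theorem \ref{A} by observing that a finite product of positive log-convex functions is again a positive log-convex function. First I would set $F:=\prod_{i=1}^{n}f_{i}$ and note that $F$ is well defined and strictly positive on $I$, since each factor $f_{i}$ maps into $\left( 0,\infty \right) $. To verify log-convexity of $F$, I would apply the defining inequality (\ref{e.1}) to each factor: for $x,y\in I$ and $t\in \left[ 0,1\right] $,
\[
F\left( tx+\left( 1-t\right) y\right) =\prod_{i=1}^{n}f_{i}\left( tx+\left( 1-t\right) y\right) \leq \prod_{i=1}^{n}\left[ f_{i}\left( x\right) \right] ^{t}\left[ f_{i}\left( y\right) \right] ^{1-t}=\left[ F\left( x\right) \right] ^{t}\left[ F\left( y\right) \right] ^{1-t},
\]
where the regrouping on the right collapses to a single $t$-th and $\left( 1-t\right) $-th power because the exponents $t$ and $1-t$ are common to all factors. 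This is exactly (\ref{e.1}) for $F$, so $F$ is log-convex on $I$. Equivalently, one may observe that $\log F=\sum_{i=1}^{n}\log f_{i}$ is a finite sum of convex functions, hence convex.

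Having established that $F$ is a single positive log-convex function, I would simply invoke Theorem \ref{A} with $F$ on $\left[ a,b\right] $, which gives
\[
\frac{1}{b-a}\int_{a}^{b}F\left( t\right) dt\leq L\left( F\left( a\right) ,F\left( b\right) \right) .
\]
Substituting $F\left( a\right) =\prod_{i=1}^{n}f_{i}\left( a\right) $ and $F\left( b\right) =\prod_{i=1}^{n}f_{i}\left( b\right) $ then yields precisely the asserted inequality (\ref{E3}).

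For the log-concave case I would argue symmetrically: if each $f_{i}$ is positive and log-concave, then reversing each inequality in the product step above shows that $F=\prod_{i=1}^{n}f_{i}$ is itself positive and log-concave, and the reversed conclusion of Theorem \ref{A} applied to $F$ gives the reversed form of (\ref{E3}).

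I do not anticipate a genuine obstacle, since the result is essentially a corollary of Theorem \ref{A}; the only point requiring care is the regrouping step in the display above, which is immediate from the common exponents. If a self-contained proof avoiding Theorem \ref{A} were preferred, the same computation could be carried out directly by parametrizing $x=ta+\left( 1-t\right) b$, applying log-convexity of the product, and evaluating $\int_{0}^{1}P^{t}Q^{1-t}\,dt=\left( P-Q\right) /\left( \ln P-\ln Q\right) =L\left( P,Q\right) $ with $P=\prod_{i=1}^{n}f_{i}\left( a\right) $ and $Q=\prod_{i=1}^{n}f_{i}\left( b\right) $.
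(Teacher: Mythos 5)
Your proof is correct, but it is organized differently from the paper's. The paper does not invoke Theorem \ref{A} at all: it multiplies the $n$ log-convexity inequalities at the points $a,b$ (which is your product step, specialized to the endpoints), then integrates the resulting bound over $t\in[0,1]$ and evaluates $\int_{0}^{1}\bigl[\prod_{i}f_{i}(a)/f_{i}(b)\bigr]^{t}dt$ explicitly to produce the logarithmic mean --- in effect re-running the proof of Theorem \ref{A} for the product function. You instead isolate the closure property (a finite product of positive log-convex functions is log-convex, equivalently $\log F=\sum_{i}\log f_{i}$ is convex) and then apply Theorem \ref{A} as a black box to $F=\prod_{i}f_{i}$; this is logically sound and not circular, since Theorem \ref{A} is an independently established result from the cited literature, not a consequence of Theorem \ref{t1}. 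Your route buys brevity and modularity, and makes transparent \emph{why} the theorem holds (it is literally Theorem \ref{A} applied to a single log-convex function); the paper's route buys self-containedness, since it never relies on the external proof of Theorem \ref{A} and exhibits the integral computation explicitly. The self-contained alternative you sketch in your closing paragraph --- parametrize $x=ta+(1-t)b$ and evaluate $\int_{0}^{1}P^{t}Q^{1-t}dt=L(P,Q)$ --- is essentially the paper's argument verbatim, so you have in fact described both proofs. One small point both treatments share: the evaluation of the integral (and the formula for $L$) tacitly assumes $\prod_{i}f_{i}(a)\neq\prod_{i}f_{i}(b)$; the degenerate case is handled by the convention $L(p,p)=p$, under which the integral equals $1\cdot\prod_{i}f_{i}(b)$ and the inequality still holds.
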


\begin{proof}
Since $f_{i}$ $\left( i=1,2,...,n\right) $ are $\log $-convex functions on $%
I $, we have 
\begin{equation}
f_{i}\left( ta+\left( 1-t\right) b\right) \leq \left[ f_{i}\left( a\right) %
\right] ^{t}\left[ f_{i}\left( b\right) \right] ^{\left( 1-t\right) }
\label{E4}
\end{equation}%
for all $a,b\in I$ and $t\in \left[ 0,1\right] .$ Writing (\ref{E4}) for $%
i=1,2,...,n$, multiplying the resulting inequalities it is easy to observe
that%
\begin{eqnarray}
\prod\limits_{i=1}^{n}f_{i}\left( ta+\left( 1-t\right) b\right) &\leq & 
\left[ \prod\limits_{i=1}^{n}f_{i}\left( a\right) \right] ^{t}\left[
\prod\limits_{i=1}^{n}f_{i}\left( b\right) \right] ^{\left( 1-t\right) }
\label{E6} \\
&=&\prod\limits_{i=1}^{n}f_{i}\left( b\right) \left[ \prod\limits_{i=1}^{n}%
\dfrac{f_{i}\left( a\right) }{f_{i}\left( b\right) }\right] ^{t}  \notag
\end{eqnarray}%
for all $a,b\in I$ and $t\in \left[ 0,1\right] .$

Integrating inequality (\ref{E6}) on $\left[ 0,1\right] $ over $t$, we get%
\begin{equation*}
\int_{0}^{1}\prod\limits_{i=1}^{n}f_{i}\left( ta+\left( 1-t\right) b\right)
dt\leq \prod\limits_{i=1}^{n}f_{i}\left( b\right) \int_{0}^{1} \left[
\prod\limits_{i=1}^{n}\dfrac{f_{i}\left( a\right) }{f_{i}\left( b\right) }%
\right] ^{t}dt.
\end{equation*}%
As%
\begin{equation*}
\int_{0}^{1}\prod\limits_{i=1}^{n}f_{i}\left( ta+\left( 1-t\right) b\right)
dt=\frac{1}{b-a}\int_{a}^{b}\prod\limits_{i=1}^{n}f_{i}\left( x\right) dx
\end{equation*}%
and%
\begin{equation*}
\int_{0}^{1}\left[ \prod\limits_{i=1}^{n}\dfrac{f_{i}\left( a\right) }{%
f_{i}\left( b\right) }\right] ^{t}dt=\frac{1}{\prod\limits_{i=1}^{n}f_{i}%
\left( b\right) }L\left( \prod\limits_{i=1}^{n}f_{i}\left( a\right)
,\prod\limits_{i=1}^{n}f_{i}\left( b\right) \right) ,
\end{equation*}%
the theorem is proved.
\end{proof}

\begin{remark}
\label{r1} By taking $i=1$ and $f_{1}=f$ in Theorem \ref{t1}$,$ we obtain (%
\ref{e.2}).
\end{remark}

\begin{corollary}
\label{c2} Let $f_{i}:I\subset R\rightarrow \left( 0,\infty \right) $ $%
(i=1,2,...,n)$ be $\log $-convex functions on $I$ and $a,b\in I$ with $a<b$.
Then%
\begin{eqnarray}
&&\dfrac{1}{b-a}\int_{a}^{b}\prod\limits_{i=1}^{n}f_{i}\left( x\right) dx
\label{E7} \\
&\leq &\underset{x\in \left[ a,b\right] }{\min }\dfrac{\left( x-a\right)
L\left( \prod\limits_{i=1}^{n}f_{i}\left( a\right)
,\prod\limits_{i=1}^{n}f_{i}\left( x\right) \right) +\left( b-x\right)
L\left( \prod\limits_{i=1}^{n}f_{i}\left( x\right)
,\prod\limits_{i=1}^{n}f_{i}\left( b\right) \right) }{b-a}.  \notag
\end{eqnarray}%
If $f_{i}$ $(i=1,2,...,n)$ are a positive $\log $-concave functions, then%
\begin{align}
& \dfrac{1}{b-a}\int_{a}^{b}\prod\limits_{i=1}^{n}f_{i}\left( x\right) dx
\label{E8} \\
& \geq \underset{x\in \left[ a,b\right] }{\max }\dfrac{\left( x-a\right)
L\left( \prod\limits_{i=1}^{n}f_{i}\left( a\right)
,\prod\limits_{i=1}^{n}f_{i}\left( x\right) \right) +\left( b-x\right)
L\left( \prod\limits_{i=1}^{n}f_{i}\left( x\right)
,\prod\limits_{i=1}^{n}f_{i}\left( b\right) \right) }{b-a}.  \notag
\end{align}
\end{corollary}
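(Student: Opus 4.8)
The plan is to localize Theorem \ref{t1} by applying it on the two subintervals determined by an arbitrary point $x\in\left[ a,b\right]$ and then optimizing over $x$. First I would fix $x\in\left[ a,b\right]$ and split the integral additively,
\[
\int_{a}^{b}\prod_{i=1}^{n}f_{i}\left( t\right) dt=\int_{a}^{x}\prod_{i=1}^{n}f_{i}\left( t\right) dt+\int_{x}^{b}\prod_{i=1}^{n}f_{i}\left( t\right) dt,
\]
which is legitimate since each log-convex $f_{i}$ is continuous on the interior of $I$, so the product is integrable on each piece and the integral is additive over the decomposition $\left[ a,b\right] =\left[ a,x\right] \cup \left[ x,b\right] $.

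Next, each $f_{i}$ is log-convex on $I$ and hence log-convex on each of $\left[ a,x\right]$ and $\left[ x,b\right]$, so Theorem \ref{t1} applies to both. On $\left[ a,x\right]$ it yields $\frac{1}{x-a}\int_{a}^{x}\prod f_{i}\leq L\left( \prod f_{i}\left( a\right) ,\prod f_{i}\left( x\right) \right)$, and on $\left[ x,b\right]$ it yields $\frac{1}{b-x}\int_{x}^{b}\prod f_{i}\leq L\left( \prod f_{i}\left( x\right) ,\prod f_{i}\left( b\right) \right)$. Multiplying these by $\left( x-a\right)$ and $\left( b-x\right)$ respectively and adding gives
\[
\int_{a}^{b}\prod_{i=1}^{n}f_{i}\leq \left( x-a\right) L\left( \prod_{i=1}^{n}f_{i}\left( a\right) ,\prod_{i=1}^{n}f_{i}\left( x\right) \right) +\left( b-x\right) L\left( \prod_{i=1}^{n}f_{i}\left( x\right) ,\prod_{i=1}^{n}f_{i}\left( b\right) \right) .
\]

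Dividing by $\left( b-a\right)$ reproduces the right-hand side of (\ref{E7}) evaluated at the chosen $x$. Since the left-hand side is independent of $x$ while this bound holds for every $x\in\left[ a,b\right]$, the inequality is preserved upon taking the minimum of the right-hand side over $x\in\left[ a,b\right]$, which establishes (\ref{E7}). The right-hand side is a continuous function of $x$ on the compact interval $\left[ a,b\right]$, so the minimum is genuinely attained and writing $\min$ (rather than $\inf$) is justified. The log-concave statement (\ref{E8}) follows verbatim, using the reversed inequality in Theorem \ref{t1} and replacing the minimum with a maximum.

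There is no substantive obstacle here: the corollary is a direct consequence of the additivity of the integral combined with Theorem \ref{t1}. The only point requiring a word of care is the endpoint behavior, namely that at $x=a$ or $x=b$ one subinterval degenerates; but the associated term drops out because its length factor $\left( x-a\right)$ or $\left( b-x\right)$ vanishes, so the bound remains valid there and the extremum is taken over the full closed interval.
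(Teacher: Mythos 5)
Your proof is correct and follows exactly the paper's own argument: split the integral at an arbitrary $x\in\left[ a,b\right]$, apply Theorem \ref{t1} on each subinterval, and take the minimum (resp.\ maximum) over $x$. The extra remarks on continuity, attainment of the extremum, and the degenerate endpoints are careful additions the paper omits, but they do not change the substance.
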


\begin{proof}
Let $f_{i}$ $(i=1,2,...,n)$ be a positive $\log $-convex functions. Then by
Theorem \ref{t1} we have that%
\begin{eqnarray*}
&&\int_{a}^{b}\prod\limits_{i=1}^{n}f_{i}\left( t\right) dt \\
&=&\int_{a}^{x}\prod\limits_{i=1}^{n}f_{i}\left( t\right)
dt+\int_{x}^{b}\prod\limits_{i=1}^{n}f_{i}\left( t\right) dt \\
&\leq &\left( x-a\right) L\left( \prod\limits_{i=1}^{n}f_{i}\left( a\right)
,\prod\limits_{i=1}^{n}f_{i}\left( x\right) \right) +\left( b-x\right)
L\left( \prod\limits_{i=1}^{n}f_{i}\left( x\right)
,\prod\limits_{i=1}^{n}f_{i}\left( b\right) \right)
\end{eqnarray*}%
for all $x\in \left[ a,b\right] $, whence (\ref{E7}). Similarly we can prove
(\ref{E8}).
\end{proof}

\begin{remark}
\label{r2} By taking $i=1$ and $f_{1}=f$ in (\ref{E7})and (\ref{E8})$,$ we
obtain the inequalities of Corollary \ref{c1}.
\end{remark}

We will now point out some new results of the Hadamard type for log-convex, $%
m-$convex and $\left( \alpha ,m\right) $-convex functions, respectively.

\begin{theorem}
\label{t2} Let $f,g:I\rightarrow \left( 0,\infty \right) $ be $\log $-convex
functions on $I$ and $a,b\in I$ with $a<b.$ Then the following inequalities
hold:%
\begin{eqnarray}
&&f\left( \dfrac{a+b}{2}\right) g\left( \dfrac{a+b}{2}\right)  \label{E9} \\
&\leq &\dfrac{1}{2}\left\{ \dfrac{1}{b-a}\int_{a}^{b}\left[ f\left( x\right)
f\left( a+b-x\right) +g\left( x\right) g\left( a+b-x\right) \right]
dx\right\}  \notag \\
&\leq &\dfrac{f\left( a\right) f\left( b\right) +g\left( a\right) g\left(
b\right) }{2}.  \notag
\end{eqnarray}
\end{theorem}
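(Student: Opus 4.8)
The plan is to prove the two inequalities separately, each by first deriving a pointwise estimate in the variable $x$ and then integrating over $[a,b]$. For the left-hand inequality, the key observation is that for every $x\in[a,b]$ the midpoint $\frac{a+b}{2}$ is exactly the midpoint of the pair $x$ and $a+b-x$, since $\frac{x+(a+b-x)}{2}=\frac{a+b}{2}$. Applying the log-convexity (\ref{e.1}) of $f$ with $t=\frac12$ gives
\begin{equation*}
f\left(\frac{a+b}{2}\right)\leq\sqrt{f(x)\,f(a+b-x)},
\end{equation*}
and likewise for $g$. Multiplying these two estimates and then invoking the arithmetic--geometric mean inequality $\sqrt{PQ}\leq\frac{P+Q}{2}$ with $P=f(x)f(a+b-x)$ and $Q=g(x)g(a+b-x)$ yields the pointwise bound
\begin{equation*}
f\left(\frac{a+b}{2}\right)g\left(\frac{a+b}{2}\right)\leq\frac{f(x)f(a+b-x)+g(x)g(a+b-x)}{2}.
\end{equation*}
Integrating in $x$ over $[a,b]$ and dividing by $b-a$ gives the first inequality.

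For the right-hand inequality, I would parametrize $x=ta+(1-t)b$ with $t\in[0,1]$, so that the reflected point becomes $a+b-x=(1-t)a+tb$. Applying log-convexity to $f$ at both points,
\begin{equation*}
f(x)\leq[f(a)]^{t}[f(b)]^{1-t},\qquad f(a+b-x)\leq[f(a)]^{1-t}[f(b)]^{t},
\end{equation*}
and multiplying, the exponents add to $1$ in each factor, so that $f(x)f(a+b-x)\leq f(a)f(b)$; the same argument gives $g(x)g(a+b-x)\leq g(a)g(b)$. Summing these two pointwise bounds, integrating over $[a,b]$, dividing by $b-a$, and halving produces the upper bound $\frac{f(a)f(b)+g(a)g(b)}{2}$.

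Neither step presents a genuine obstacle; the whole argument rests on two clean applications of log-convexity, at $t=\frac12$ for the lower bound and via the telescoping of exponents for the upper bound. The only point requiring a little care is recognizing the reflection $x\mapsto a+b-x$ as the device that symmetrizes the problem: it is what makes the midpoint appear in the lower estimate and what makes the exponents cancel in the upper estimate, and one should note that $a+b-x\in[a,b]\subset I$ so that all evaluations are legitimate. The arithmetic--geometric mean step is what allows the two separate products for $f$ and $g$ to be combined additively into the single integrand appearing in the statement.
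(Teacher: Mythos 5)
Your proof is correct and follows essentially the same route as the paper: the identical pointwise estimate $f\left(\frac{a+b}{2}\right)g\left(\frac{a+b}{2}\right)\leq\frac{1}{2}\left[f(x)f(a+b-x)+g(x)g(a+b-x)\right]$ obtained from log-convexity at $t=\frac{1}{2}$ combined with the elementary inequality $cd\leq\frac{1}{2}\left(c^{2}+d^{2}\right)$ (the paper applies the elementary inequality before log-convexity rather than after, which is an equivalent reordering), and the identical telescoping-exponent argument $f(x)f(a+b-x)\leq f(a)f(b)$ for the upper bound. The only cosmetic difference is that you integrate in $x$ over $[a,b]$ via the reflection $x\mapsto a+b-x$, whereas the paper parametrizes $x=ta+(1-t)b$ and integrates over $t\in[0,1]$.
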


\begin{proof}
We can write%
\begin{equation}
\frac{a+b}{2}=\frac{ta+\left( 1-t\right) b}{2}+\frac{\left( 1-t\right) a+tb}{%
2}.  \label{E10}
\end{equation}%
Using the elementary inequality $cd\leq \frac{1}{2}\left[ c^{2}+d^{2}\right]
\;\;$($c,d\geq 0$ reals) and equality (\ref{E10}), we have%
\begin{eqnarray}
&&f\left( \dfrac{a+b}{2}\right) g\left( \dfrac{a+b}{2}\right)  \label{E11} \\
&\leq &\frac{1}{2}\left[ f^{2}\left( \dfrac{a+b}{2}\right) +g^{2}\left( 
\dfrac{a+b}{2}\right) \right]  \notag \\
&=&\frac{1}{2}\left[ f^{2}\left( \dfrac{ta+\left( 1-t\right) b}{2}+\dfrac{%
\left( 1-t\right) a+tb}{2}\right) \right.  \notag \\
&&\left. +g^{2}\left( \dfrac{ta+\left( 1-t\right) b}{2}+\dfrac{\left(
1-t\right) a+tb}{2}\right) \right]  \notag \\
&\leq &\dfrac{1}{2}\left\{ \left[ \left( f\left( ta+\left( 1-t\right)
b\right) \right) ^{\frac{1}{2}}\right] ^{2}\left[ \left( f\left( \left(
1-t\right) a+tb\right) \right) ^{\frac{1}{2}}\right] ^{2}\right.  \notag \\
&&\left. +\left[ \left( g\left( ta+\left( 1-t\right) b\right) \right) ^{%
\frac{1}{2}}\right] ^{2}\left[ \left( g\left( \left( 1-t\right) a+tb\right)
\right) ^{\frac{1}{2}}\right] ^{2}\right\}  \notag \\
&=&\dfrac{1}{2}\left[ f\left( ta+\left( 1-t\right) b\right) f\left( \left(
1-t\right) a+tb\right) \right.  \notag \\
&&\left. +g\left( ta+\left( 1-t\right) b\right) g\left( \left( 1-t\right)
a+tb\right) \right] .  \notag
\end{eqnarray}%
Since $f,g$ are $\log $-convex functions, we obtain%
\begin{eqnarray}
&&\frac{1}{2}\left[ f\left( ta+\left( 1-t\right) b\right) f\left( \left(
1-t\right) a+tb\right) \right.  \label{E12} \\
&&\left. +g\left( ta+\left( 1-t\right) b\right) g\left( \left( 1-t\right)
a+tb\right) \right]  \notag \\
&\leq &\left\{ \dfrac{1}{2}\left[ f\left( a\right) \right] ^{t}\left[
f\left( b\right) \right] ^{\left( 1-t\right) }\left[ f\left( a\right) \right]
^{\left( 1-t\right) }\left[ f\left( b\right) \right] ^{t}\right.  \notag \\
&&\left. +\left[ g\left( a\right) \right] ^{t}\left[ g\left( b\right) \right]
^{\left( 1-t\right) }\left[ g\left( a\right) \right] ^{\left( 1-t\right) }%
\left[ g\left( b\right) \right] ^{t}\right\}  \notag \\
&=&\dfrac{f\left( a\right) f\left( b\right) +g\left( a\right) g\left(
b\right) }{2}  \notag
\end{eqnarray}%
for all $a,b\in I$ and $t\in \left[ 0,1\right] $.

Rewriting (\ref{E11}) and (\ref{E12}), we have%
\begin{eqnarray}
f\left( \dfrac{a+b}{2}\right) g\left( \dfrac{a+b}{2}\right) &\leq &\dfrac{1}{%
2}\left[ f\left( ta+\left( 1-t\right) b\right) f\left( \left( 1-t\right)
a+tb\right) \right.  \label{E13} \\
&&\left. +g\left( ta+\left( 1-t\right) b\right) g\left( \left( 1-t\right)
a+tb\right) \right]  \notag
\end{eqnarray}%
and%
\begin{eqnarray}
&&\dfrac{1}{2}\left[ f\left( ta+\left( 1-t\right) b\right) f\left( \left(
1-t\right) a+tb\right) +g\left( ta+\left( 1-t\right) b\right) g\left( \left(
1-t\right) a+tb\right) \right]  \label{E14} \\
&\leq &\dfrac{f\left( a\right) f\left( b\right) +g\left( a\right) g\left(
b\right) }{2}.  \notag
\end{eqnarray}%
Integrating both sides of (\ref{E13}) and (\ref{E14}) on $\left[ 0,1\right] $
over $t$, respectively, we obtain%
\begin{eqnarray}
&&f\left( \dfrac{a+b}{2}\right) g\left( \dfrac{a+b}{2}\right)  \label{E15} \\
&\leq &\dfrac{1}{2}\left[ \dfrac{1}{b-a}\int_{a}^{b}\left[ f\left( x\right)
f\left( a+b-x\right) +g\left( x\right) g\left( a+b-x\right) \right] dx\right]
\notag
\end{eqnarray}%
and%
\begin{eqnarray}
&&\dfrac{1}{2}\left[ \dfrac{1}{b-a}\int_{a}^{b}\left[ f\left( x\right)
f\left( a+b-x\right) +g\left( x\right) g\left( a+b-x\right) \right] dx\right]
\label{E16} \\
&\leq &\dfrac{f\left( a\right) f\left( b\right) +g\left( a\right) g\left(
b\right) }{2}.  \notag
\end{eqnarray}%
Combining (\ref{E15}) and (\ref{E16}), we get the desired inequalities (\ref%
{E9}). The proof is complete.
\end{proof}

\begin{theorem}
\label{t3}Let $f,g:I\rightarrow \left( 0,\infty \right) $ be $\log $-convex
functions on $I$ and $a,b\in I$ with $a<b.$ Then the following inequalities
hold:%
\begin{eqnarray}
&&2f\left( \dfrac{a+b}{2}\right) g\left( \dfrac{a+b}{2}\right)  \label{E17}
\\
&\leq &\dfrac{1}{b-a}\int_{a}^{b}\left[ f^{2}\left( x\right) +g^{2}\left(
x\right) \right] dx  \notag \\
&\leq &\dfrac{f\left( a\right) +f\left( b\right) }{2}L\left( f\left(
a\right) ,f\left( b\right) \right) +\dfrac{g\left( a\right) +g\left(
b\right) }{2}L\left( g\left( a\right) ,g\left( b\right) \right)  \notag
\end{eqnarray}%
where $L\left( .\;,\;.\right) $ is a logarithmic mean of positive real
numbers.
\end{theorem}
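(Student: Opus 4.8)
The plan is to prove the two inequalities in (\ref{E17}) separately, exploiting the facts that a $\log $-convex function is automatically convex and that its square is again $\log $-convex.

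For the left-hand inequality, I would first apply the elementary inequality $2cd\le c^{2}+d^{2}$ (already used in Theorem \ref{t2}) with $c=f\!\left(\frac{a+b}{2}\right)$ and $d=g\!\left(\frac{a+b}{2}\right)$, which gives
\[
2f\!\left(\tfrac{a+b}{2}\right)g\!\left(\tfrac{a+b}{2}\right)\le f^{2}\!\left(\tfrac{a+b}{2}\right)+g^{2}\!\left(\tfrac{a+b}{2}\right).
\]
Next I would observe that since $f$ is $\log $-convex, $\log f^{2}=2\log f$ is convex, so $f^{2}$ is $\log $-convex and hence convex on $I$; the same holds for $g^{2}$. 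Applying the left-hand part of Hadamard's inequality (\ref{e.0}) to the convex functions $f^{2}$ and $g^{2}$ yields $f^{2}\!\left(\frac{a+b}{2}\right)\le \frac{1}{b-a}\int_{a}^{b}f^{2}(x)\,dx$ together with the analogous bound for $g^{2}$. Adding these two estimates and combining with the elementary inequality above produces exactly the first inequality in (\ref{E17}).

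For the right-hand inequality, the idea is to bound $f^{2}$ pointwise by the $\log $-convexity estimate and then integrate. Writing $x=ta+(1-t)b$ and using (\ref{e.1}) gives $f^{2}(ta+(1-t)b)\le [f(a)]^{2t}[f(b)]^{2(1-t)}$. Integrating over $t\in[0,1]$ and using the standard substitution $\int_{0}^{1}h(ta+(1-t)b)\,dt=\frac{1}{b-a}\int_{a}^{b}h(x)\,dx$ reduces the problem to evaluating $\int_{0}^{1}[f(a)]^{2t}[f(b)]^{2(1-t)}\,dt$, which (by the same computation that produced $L$ in Theorem \ref{A}) equals the logarithmic mean $L(f^{2}(a),f^{2}(b))$. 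The identical computation applies to $g^{2}$, so adding the two contributions yields the upper bound in the raw form $L(f^{2}(a),f^{2}(b))+L(g^{2}(a),g^{2}(b))$.

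The routine integral evaluations are mechanical; the one step that is really the linchpin of the argument is the algebraic identity
\[
L(p^{2},q^{2})=\frac{p+q}{2}\,L(p,q),
\]
valid for positive $p,q$, which rewrites $L(f^{2}(a),f^{2}(b))$ as $\frac{f(a)+f(b)}{2}L(f(a),f(b))$ and likewise for $g$, turning the bound into the exact form stated in (\ref{E17}). This follows at once by factoring $p^{2}-q^{2}=(p+q)(p-q)$ and $\ln p^{2}-\ln q^{2}=2(\ln p-\ln q)$ in the definition of $L$; the degenerate case $p=q$ is absorbed by the convention $L(p,p)=p$, under which both sides reduce to $p^{2}$. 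Assembling the left and right estimates then completes the proof.
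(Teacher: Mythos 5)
Your proposal is correct, and it diverges from the paper's proof in a meaningful way on the first inequality. For the left-hand bound, the paper recycles inequality (\ref{E13}) from the proof of Theorem \ref{t2} (obtained by writing $\frac{a+b}{2}=\frac{ta+(1-t)b}{2}+\frac{(1-t)a+tb}{2}$ and using midpoint log-convexity), applies $cd\le\frac{1}{2}(c^{2}+d^{2})$ a second time to reach (\ref{E18}), and then integrates over $t$; you instead observe that log-convexity of $f$ makes $f^{2}$ log-convex and hence convex, and invoke the classical left Hadamard inequality (\ref{e.0}) for $f^{2}$ and $g^{2}$ directly, after the single elementary step $2cd\le c^{2}+d^{2}$ at the midpoint. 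Your route is more modular and self-contained (it does not depend on the intermediate inequality (\ref{E13}) from another proof), at the cost of using the auxiliary fact that log-convex implies convex, which the paper never states but which follows at once from weighted AM--GM: $[f(x)]^{t}[f(y)]^{1-t}\le tf(x)+(1-t)f(y)$. For the right-hand bound the two arguments are essentially identical --- pointwise squaring of the log-convexity estimate followed by integration in $t$ --- except that you package the resulting integral as $L(f^{2}(a),f^{2}(b))$ and then apply the identity $L(p^{2},q^{2})=\frac{p+q}{2}L(p,q)$, whereas the paper carries out the same factorization $\frac{p^{2}-q^{2}}{2(\ln p-\ln q)}=\frac{p+q}{2}L(p,q)$ inline in its computation; making that identity explicit is a small but genuine clarification, since it exposes why the asymmetric-looking product $\frac{f(a)+f(b)}{2}L(f(a),f(b))$ appears. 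One caveat: the paper's integral computation (and yours) implicitly assumes $f(a)\neq f(b)$ and $g(a)\neq g(b)$ when antidifferentiating the exponential; you handle the degenerate case explicitly via the convention $L(p,p)=p$, which the paper leaves tacit.
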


\begin{proof}
From the inequality (\ref{E13}), we have%
\begin{eqnarray*}
f\left( \dfrac{a+b}{2}\right) g\left( \dfrac{a+b}{2}\right) &\leq &\dfrac{1}{%
2}\left[ f\left( ta+\left( 1-t\right) b\right) f\left( \left( 1-t\right)
a+tb\right) \right. \\
&&+\left. g\left( ta+\left( 1-t\right) b\right) g\left( \left( 1-t\right)
a+tb\right) \right]
\end{eqnarray*}%
for all $a,b\in I$ and $t\in \left[ 0,1\right] .$

Using the elementary inequality $cd\leq \frac{1}{2}\left[ c^{2}+d^{2}\right]
\;\;$($c,d\geq 0$ reals) on the right side of the above inequality, we have%
\begin{eqnarray}
f\left( \dfrac{a+b}{2}\right) g\left( \dfrac{a+b}{2}\right) &\leq &\dfrac{1}{%
4}\left[ f^{2}\left( ta+\left( 1-t\right) b\right) +f^{2}\left( \left(
1-t\right) a+tb\right) \right.  \label{E18} \\
&&+\left. g^{2}\left( ta+\left( 1-t\right) b\right) +g^{2}\left( \left(
1-t\right) a+tb\right) \right] .  \notag
\end{eqnarray}%
Since $f,g$ are $\log $-convex functions, then we get%
\begin{eqnarray}
&&\left[ f^{2}\left( ta+\left( 1-t\right) b\right) +f^{2}\left( \left(
1-t\right) a+tb\right) \right.  \label{E19} \\
&&\left. +g^{2}\left( ta+\left( 1-t\right) b\right) +g^{2}\left( \left(
1-t\right) a+tb\right) \right]  \notag \\
&\leq &\left\{ \left[ f\left( a\right) \right] ^{2t}\left[ f\left( b\right) %
\right] ^{\left( 2-2t\right) }+\left[ f\left( a\right) \right] ^{\left(
2-2t\right) }\left[ f\left( b\right) \right] ^{2t}\right.  \notag \\
&&\left. +\left[ g\left( a\right) \right] ^{2t}\left[ g\left( b\right) %
\right] ^{\left( 2-2t\right) }+\left[ g\left( a\right) \right] ^{\left(
2-2t\right) }\left[ g\left( b\right) \right] ^{2t}\right\}  \notag \\
&=&\left[ f^{2}\left( b\right) \left[ \dfrac{f\left( a\right) }{f\left(
b\right) }\right] ^{2t}+f^{2}\left( a\right) \left[ \dfrac{f\left( b\right) 
}{f\left( a\right) }\right] ^{2t}\right.  \notag \\
&&\left. +g^{2}\left( b\right) \left[ \dfrac{g\left( a\right) }{g\left(
b\right) }\right] ^{2t}+g^{2}\left( a\right) \left[ \dfrac{g\left( b\right) 
}{g\left( a\right) }\right] ^{2t}\right] .  \notag
\end{eqnarray}%
Integrating both sides of (\ref{E18}) and (\ref{E19}) on $\left[ 0,1\right] $
over $t$, respectively, we obtain 
\begin{equation}
2f\left( \frac{a+b}{2}\right) g\left( \frac{a+b}{2}\right) \leq \frac{1}{b-a}%
\int_{a}^{b}\left[ f^{2}\left( x\right) +g^{2}\left( x\right) \right] dx
\label{E20}
\end{equation}%
and%
\begin{eqnarray}
&&\dfrac{1}{b-a}\int_{a}^{b}\left[ f^{2}\left( x\right) +g^{2}\left(
x\right) \right] dx  \label{ek1} \\
&\leq &\dfrac{1}{2}\left( f^{2}\left( b\right) \int_{0}^{1}\left[ \dfrac{%
f\left( a\right) }{f\left( b\right) }\right] ^{2t}dt+f^{2}\left( a\right)
\int_{0}^{1}\left[ \dfrac{f\left( b\right) }{f\left( a\right) }\right]
^{2t}dt\right.  \notag \\
&&\left. +g^{2}\left( b\right) \int_{0}^{1}\left[ \dfrac{g\left( a\right) }{%
g\left( b\right) }\right] ^{2t}dt+g^{2}\left( a\right) \int_{0}^{1}\left[ 
\dfrac{g\left( b\right) }{g\left( a\right) }\right] ^{2t}dt\right)  \notag \\
&=&\dfrac{1}{2}\left( f^{2}\left( b\right) \left[ \dfrac{\left[ \frac{%
f\left( a\right) }{f\left( b\right) }\right] ^{2t}}{2\log \frac{f\left(
a\right) }{f\left( b\right) }}\right] _{0}^{1}+f^{2}\left( a\right) \left[ 
\dfrac{\left[ \frac{f\left( b\right) }{f\left( a\right) }\right] ^{2t}}{%
2\log \frac{f\left( b\right) }{f\left( a\right) }}\right] _{0}^{1}\right. 
\notag \\
&&\left. +g^{2}\left( b\right) \left[ \dfrac{\left[ \frac{g\left( a\right) }{%
g\left( b\right) }\right] ^{2t}}{2\log \frac{g\left( a\right) }{g\left(
b\right) }}\right] _{0}^{1}+g^{2}\left( a\right) \left[ \dfrac{\left[ \frac{%
g\left( b\right) }{g\left( a\right) }\right] ^{2t}}{2\log \frac{g\left(
b\right) }{g\left( a\right) }}\right] _{0}^{1}\right)  \notag
\end{eqnarray}%
\begin{eqnarray*}
&=&\dfrac{1}{2}\left( \dfrac{f^{2}\left( a\right) -f^{2}\left( b\right) }{%
2\left( \log f\left( a\right) -\log f\left( b\right) \right) }+\dfrac{%
f^{2}\left( b\right) -f^{2}\left( a\right) }{2\left( \log f\left( b\right)
-\log f\left( a\right) \right) }\right. \\
&&\left. +\dfrac{g^{2}\left( a\right) -g^{2}\left( b\right) }{2\left( \log
g\left( a\right) -\log g\left( b\right) \right) }+\dfrac{g^{2}\left(
b\right) -g^{2}\left( a\right) }{2\left( \log g\left( b\right) -\log g\left(
a\right) \right) }\right) \\
&=&\dfrac{1}{2}\left( \dfrac{f\left( a\right) +f\left( b\right) }{2}L\left(
f\left( a\right) ,f\left( b\right) \right) +\dfrac{f\left( a\right) +f\left(
b\right) }{2}L\left( f\left( b\right) ,f\left( a\right) \right) \right. \\
&&\left. +\dfrac{g\left( a\right) +g\left( b\right) }{2}L\left( g\left(
a\right) ,g\left( b\right) \right) +\dfrac{g\left( a\right) +g\left(
b\right) }{2}L\left( g\left( b\right) ,g\left( a\right) \right) \right) \\
&=&\left\{ \dfrac{f\left( a\right) +f\left( b\right) }{2}L\left( f\left(
a\right) ,f\left( b\right) \right) +\dfrac{g\left( a\right) +g\left(
b\right) }{2}L\left( g\left( a\right) ,g\left( b\right) \right) \right\} .
\end{eqnarray*}%
Combining (\ref{E20}) and (\ref{ek1}), we get the required inequalities (\ref%
{E17}). The proof is complete.
\end{proof}

\begin{theorem}
\label{t4}Let $f,g:\left[ 0,\infty \right) \rightarrow \left[ 0,\infty
\right) $ be such that $fg$ is in $L^{1}\left( \left[ a,b\right] \right) $,
where $0\leq a<b<\infty $. If $f$ is non-increasing $m_{1}-$convex function
and $g$ is non-increasing $m_{2}-$convex function on $\left[ a,b\right] $
for some fixed $m_{1},m_{2}\in \left( 0,1\right] ,$ then the following
inequality holds:%
\begin{equation}
\dfrac{1}{b-a}\int_{a}^{b}f\left( x\right) g\left( x\right) dx\leq \min
\left\{ S_{1},S_{2}\right\}  \label{E22}
\end{equation}%
where%
\begin{eqnarray*}
S_{1} &=&\dfrac{1}{6}\left[ \left( f^{2}\left( a\right) +g^{2}\left(
a\right) \right) +m_{1}f\left( a\right) f\left( \dfrac{b}{m_{1}}\right)
\right. \\
&&+\left. m_{2}g\left( a\right) g\left( \dfrac{b}{m_{2}}\right)
+m_{1}^{2}f^{2}\left( \dfrac{b}{m_{1}}\right) +m_{2}^{2}g^{2}\left( \dfrac{b%
}{m_{2}}\right) \right]
\end{eqnarray*}%
\begin{eqnarray*}
S_{2} &=&\dfrac{1}{6}\left[ \left( f^{2}\left( b\right) +g^{2}\left(
b\right) \right) +m_{1}f\left( b\right) f\left( \dfrac{a}{m_{1}}\right)
\right. \\
&&+\left. m_{2}g\left( b\right) g\left( \dfrac{a}{m_{2}}\right)
+m_{1}^{2}f^{2}\left( \dfrac{a}{m_{1}}\right) +m_{2}^{2}g^{2}\left( \dfrac{a%
}{m_{2}}\right) \right] .
\end{eqnarray*}
\end{theorem}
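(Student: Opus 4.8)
The plan is to reduce the integral to the interval $[0,1]$ through the two affine substitutions $x=ta+(1-t)b$ and $x=tb+(1-t)a$, and then to \emph{decouple} the product $fg$ by the elementary inequality $cd\leq\frac12(c^2+d^2)$ applied pointwise to $c=f(x)$, $d=g(x)$, so that it suffices to estimate $\frac{1}{b-a}\int_a^b f^2$ and $\frac{1}{b-a}\int_a^b g^2$ separately. Indeed, applied under the integral sign this gives
\begin{equation*}
\frac{1}{b-a}\int_a^b f(x)g(x)\,dx\leq\frac{1}{2}\left[\frac{1}{b-a}\int_a^b f^2(x)\,dx+\frac{1}{b-a}\int_a^b g^2(x)\,dx\right],
\end{equation*}
and it is precisely this decoupling that produces the $f^2$/$g^2$ (rather than mixed $fg$) structure visible in $S_1$ and $S_2$.

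To obtain the bound $S_1$, I would substitute $x=ta+(1-t)b$, so that $\frac{1}{b-a}\int_a^b f^2(x)\,dx=\int_0^1 f^2(ta+(1-t)b)\,dt$, and then exploit $m_1$-convexity through the representation $ta+(1-t)b=ta+m_1(1-t)\frac{b}{m_1}$, which yields
\begin{equation*}
f\left(ta+(1-t)b\right)\leq tf(a)+m_1(1-t)f\left(\frac{b}{m_1}\right).
\end{equation*}
Since $f\geq0$ and the right-hand side is nonnegative, I may square and integrate. Using $\int_0^1 t^2\,dt=\int_0^1(1-t)^2\,dt=\frac13$ and $\int_0^1 t(1-t)\,dt=\frac16$, the quadratic integrates to $\frac13\bigl(f^2(a)+m_1 f(a)f(\tfrac{b}{m_1})+m_1^2 f^2(\tfrac{b}{m_1})\bigr)$, and the analogous estimate for $g$ with $m_2$ gives the $g$-terms. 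Summing, halving, and collecting the factor $\frac16$ reproduces $S_1$ exactly.

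The estimate $S_2$ is the mirror image: substituting $x=tb+(1-t)a$ and writing $tb+(1-t)a=tb+m_1(1-t)\frac{a}{m_1}$, the same $m_1$-convexity step gives $f(tb+(1-t)a)\leq tf(b)+m_1(1-t)f(\tfrac{a}{m_1})$, and repeating the quadratic integration produces $S_2$. Taking the smaller of the two bounds yields (\ref{E22}).

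The one genuinely delicate point is the application of $m_1$-convexity at the auxiliary node $\frac{b}{m_1}$ (resp. $\frac{a}{m_1}$), which for $m_1<1$ lies to the right of $b$ and hence outside $[a,b]$; this is legitimate because $f$ is defined and $m_1$-convex on all of $[0,\infty)$, so $\frac{b}{m_1}$ is an admissible point in the defining inequality. The remaining hypotheses play supporting roles: nonnegativity of $f,g$ is what lets me square the linear majorants, while the monotonicity together with $m$-convexity keeps $f,g$ bounded on $[a,b]$, guaranteeing $f^2,g^2\in L^1([a,b])$ so that the decoupling inequality and all substitutions are valid. I expect the quadratic bookkeeping to be entirely routine once the substitution and the convexity bound are in place.
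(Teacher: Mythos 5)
Your proposal is correct and follows essentially the same route as the paper: reduce to $[0,1]$ by the affine substitution, decouple $fg$ via $cd\leq\frac{1}{2}(c^{2}+d^{2})$, bound each factor by the $m_{i}$-convexity inequality at the nodes $a$ and $b/m_{i}$ (resp. $b$ and $a/m_{i}$), square, and integrate the resulting quadratics in $t$. The only differences are cosmetic (you decouple before substituting rather than after, and you spell out the $S_{2}$ case that the paper dismisses with ``analogously''), and your remark about $b/m_{1}$ lying outside $[a,b]$ is a legitimate clarification of a point the paper glosses over.
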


\begin{proof}
Since $f$ is $m_{1}$-convex function and $g$ is $m_{2}$-convex function, we
have 
\begin{equation}
f\left( ta+\left( 1-t\right) b\right) \leq tf\left( a\right) +m_{1}\left(
1-t\right) f\left( \frac{b}{m_{1}}\right)  \label{E23}
\end{equation}%
and%
\begin{equation}
g\left( ta+\left( 1-t\right) b\right) \leq tg\left( a\right) +m_{2}\left(
1-t\right) g\left( \frac{b}{m_{2}}\right)  \label{E24}
\end{equation}%
for all $t\in \left[ 0,1\right] $. It is easy to observe that%
\begin{equation}
\int_{a}^{b}f\left( x\right) g\left( x\right) dx=\left( b-a\right)
\int_{0}^{1}f\left( ta+\left( 1-t\right) b\right) g\left( ta+\left(
1-t\right) b\right) dt.  \label{E25}
\end{equation}%
Using the elementary inequality $cd\leq \frac{1}{2}\left( c^{2}+d^{2}\right)
\;$($c,d\geq 0$ reals), (\ref{E23}) and (\ref{E24}) on the right side of (%
\ref{E25}) and making the charge of variable and since $f,g$ is
non-increasing, we have%
\begin{eqnarray}
&&\int_{a}^{b}f\left( x\right) g\left( x\right) dx  \label{E26} \\
&\leq &\dfrac{1}{2}\left( b-a\right) \int_{0}^{1}\left[ \left\{ f\left(
ta+\left( 1-t\right) b\right) \right\} ^{2}+\left\{ g\left( ta+\left(
1-t\right) b\right) \right\} ^{2}\right] dt  \notag \\
&\leq &\dfrac{1}{2}\left( b-a\right) \int_{0}^{1}\left[ \left( tf\left(
a\right) +m_{1}\left( 1-t\right) f\left( \dfrac{b}{m_{1}}\right) \right)
^{2}\right.  \notag \\
&&\left. +\left( tg\left( a\right) +m_{2}\left( 1-t\right) g\left( \dfrac{b}{%
m_{2}}\right) \right) ^{2}\right] dt  \notag \\
&=&\dfrac{1}{2}\left( b-a\right) \left[ \dfrac{1}{3}f^{2}\left( a\right) +%
\dfrac{1}{3}m_{1}^{2}f^{2}\left( \dfrac{b}{m_{1}}\right) +\dfrac{1}{3}%
m_{1}f\left( a\right) f\left( \dfrac{b}{m_{1}}\right) \right.  \notag \\
&&\left. +\dfrac{1}{3}g^{2}\left( a\right) +\dfrac{1}{3}m_{2}^{2}g^{2}\left( 
\dfrac{b}{m_{2}}\right) +\dfrac{1}{3}m_{2}g\left( a\right) g\left( \dfrac{b}{%
m_{2}}\right) \right]  \notag \\
&=&\dfrac{\left( b-a\right) }{6}\left[ \left( f^{2}\left( a\right)
+g^{2}\left( a\right) \right) +m_{1}f\left( a\right) f\left( \dfrac{b}{m_{1}}%
\right) \right.  \notag \\
&&\left. +m_{2}g\left( a\right) g\left( \dfrac{b}{m_{2}}\right)
+m_{1}^{2}f^{2}\left( \dfrac{b}{m_{1}}\right) +m_{2}^{2}g^{2}\left( \dfrac{b%
}{m_{2}}\right) \right] .  \notag
\end{eqnarray}%
Analogously we obtain%
\begin{eqnarray}
&&\int_{a}^{b}f\left( x\right) g\left( x\right) dx  \label{E27} \\
&\leq &\dfrac{\left( b-a\right) }{6}\left[ \left( f^{2}\left( b\right)
+g^{2}\left( b\right) \right) +m_{1}f\left( b\right) f\left( \dfrac{a}{m_{1}}%
\right) \right.  \notag \\
&&\left. +m_{2}g\left( b\right) g\left( \dfrac{a}{m_{2}}\right)
+m_{1}^{2}f^{2}\left( \dfrac{a}{m_{1}}\right) +m_{2}^{2}g^{2}\left( \dfrac{a%
}{m_{2}}\right) \right] .  \notag
\end{eqnarray}%
Rewriting (\ref{E26}) and (\ref{E27}), we get the required inequality in (%
\ref{E22}). The proof is complete.
\end{proof}

\begin{theorem}
\label{t5} Let $f,g:\left[ 0,\infty \right) \rightarrow \left[ 0,\infty
\right) $ be such that $fg$ is in $L^{1}\left( \left[ a,b\right] \right) $,
where $0\leq a<b<\infty $. If $f$ is non-increasing $\left( \alpha
_{1},m_{1}\right) -$convex function and $g$ is non-increasing $\left( \alpha
_{2},m_{2}\right) -$convex function on $\left[ a,b\right] $ for some fixed $%
\alpha _{1},m_{1},\alpha _{2},m_{2}\in \left( 0,1\right] .$ Then the
following inequality holds:%
\begin{equation}
\dfrac{1}{b-a}\int_{a}^{b}f\left( x\right) g\left( x\right) dx\leq \min
\left\{ E_{1},E_{2}\right\}  \label{E28}
\end{equation}%
where%
\begin{eqnarray*}
E_{1} &=&\dfrac{1}{2}\left[ \dfrac{1}{2\alpha _{1}+1}f^{2}\left( a\right) +%
\dfrac{2\alpha _{1}^{2}}{\left( \alpha _{1}+1\right) \left( 2\alpha
_{1}+1\right) }m_{1}^{2}f^{2}\left( \dfrac{b}{m_{1}}\right) \right. \\
&&+\dfrac{2\alpha _{1}}{\left( \alpha _{1}+1\right) \left( 2\alpha
_{1}+1\right) }m_{1}f\left( a\right) f\left( \dfrac{b}{m_{1}}\right) +\dfrac{%
1}{2\alpha _{2}+1}g^{2}\left( a\right) \\
&&+\dfrac{2\alpha _{2}^{2}}{\left( \alpha _{2}+1\right) \left( 2\alpha
_{2}+1\right) }m_{2}^{2}g^{2}\left( \dfrac{b}{m_{2}}\right) \\
&&\left. +\dfrac{2\alpha _{2}}{\left( \alpha _{2}+1\right) \left( 2\alpha
_{2}+1\right) }m_{2}g\left( a\right) g\left( \dfrac{b}{m_{2}}\right) \right]
,
\end{eqnarray*}%
\begin{eqnarray*}
E_{2} &=&\dfrac{1}{2}\left[ \dfrac{1}{2\alpha _{1}+1}f^{2}\left( b\right) +%
\dfrac{2\alpha _{1}^{2}}{\left( \alpha _{1}+1\right) \left( 2\alpha
_{1}+1\right) }m_{1}^{2}f^{2}\left( \dfrac{a}{m_{1}}\right) \right. \\
&&+\dfrac{2\alpha _{1}}{\left( \alpha _{1}+1\right) \left( 2\alpha
_{1}+1\right) }m_{1}f\left( b\right) f\left( \dfrac{a}{m_{1}}\right) +\dfrac{%
1}{2\alpha _{2}+1}g^{2}\left( b\right) \\
&&+\dfrac{2\alpha _{2}^{2}}{\left( \alpha _{2}+1\right) \left( 2\alpha
_{2}+1\right) }m_{2}^{2}g^{2}\left( \dfrac{a}{m_{2}}\right) \\
&&\left. +\dfrac{2\alpha _{2}}{\left( \alpha _{2}+1\right) \left( 2\alpha
_{2}+1\right) }m_{2}g\left( b\right) g\left( \dfrac{a}{m_{2}}\right) \right]
.
\end{eqnarray*}
\end{theorem}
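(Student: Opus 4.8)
The plan is to mimic the proof of Theorem~\ref{t4}, replacing $m$-convexity by $(\alpha,m)$-convexity and then carrying out the resulting polynomial-in-$t$ integrations. Since $f$ is $(\alpha_1,m_1)$-convex and $g$ is $(\alpha_2,m_2)$-convex, applying Definition~\ref{d.1.2} with the points $a$ and $b/m_1$ (respectively $a$ and $b/m_2$) gives
\begin{equation*}
f\left( ta+\left( 1-t\right) b\right) \leq t^{\alpha _{1}}f\left( a\right) +m_{1}\left( 1-t^{\alpha _{1}}\right) f\left( \tfrac{b}{m_{1}}\right)
\end{equation*}
together with the analogous bound for $g$, valid for all $t\in \left[ 0,1\right] $. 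As in Theorem~\ref{t4}, the non-increasing hypothesis is what allows these bounds to be invoked at the points $b/m_{i}\geq b$, which may lie outside $\left[ a,b\right] $.

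Next I would rewrite the integral through the substitution $x=ta+\left( 1-t\right) b$ exactly as in~(\ref{E25}), namely
\begin{equation*}
\int_{a}^{b}f\left( x\right) g\left( x\right) dx=\left( b-a\right) \int_{0}^{1}f\left( ta+\left( 1-t\right) b\right) g\left( ta+\left( 1-t\right) b\right) dt,
\end{equation*}
and then apply the elementary inequality $cd\leq \tfrac{1}{2}\left( c^{2}+d^{2}\right) $ to the integrand, followed by the two convexity bounds above. This reduces the estimate to integrating the squared expressions $\bigl( t^{\alpha _{1}}f\left( a\right) +m_{1}(1-t^{\alpha _{1}})f(b/m_{1})\bigr) ^{2}$ and its $g$-counterpart over $\left[ 0,1\right] $.

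The computational heart of the argument is the evaluation of the three moment integrals
\begin{equation*}
\int_{0}^{1}t^{2\alpha }\,dt=\frac{1}{2\alpha +1},\qquad \int_{0}^{1}t^{\alpha }\left( 1-t^{\alpha }\right) \,dt=\frac{\alpha }{\left( \alpha +1\right) \left( 2\alpha +1\right) },
\end{equation*}
together with the one that requires slight care,
\begin{equation*}
\int_{0}^{1}\left( 1-t^{\alpha }\right) ^{2}\,dt=1-\frac{2}{\alpha +1}+\frac{1}{2\alpha +1}=\frac{2\alpha ^{2}}{\left( \alpha +1\right) \left( 2\alpha +1\right) }.
\end{equation*}
Substituting these (with $\alpha =\alpha _{1}$ for the $f$-terms and $\alpha =\alpha _{2}$ for the $g$-terms) into the expanded squares and collecting coefficients produces precisely $E_{1}$, the overall factor $\tfrac{1}{2}$ originating from the $cd\leq \tfrac{1}{2}(c^{2}+d^{2})$ step.

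Finally, to obtain $E_{2}$ I would repeat the whole computation starting instead from the mirror substitution $x=\left( 1-t\right) a+tb$, for which $(\alpha ,m)$-convexity based at $b$ gives $f\left( \left( 1-t\right) a+tb\right) \leq t^{\alpha _{1}}f\left( b\right) +m_{1}(1-t^{\alpha _{1}})f(a/m_{1})$ and similarly for $g$; the same three moment integrals then yield $E_{2}$ with the roles of $a$ and $b$ interchanged. Taking the smaller of the two resulting bounds gives $\min \left\{ E_{1},E_{2}\right\} $, completing the proof. The only genuine obstacle is the algebraic simplification of $\int_{0}^{1}(1-t^{\alpha })^{2}\,dt$ to the compact form $2\alpha ^{2}/\left[ \left( \alpha +1\right) \left( 2\alpha +1\right) \right] $; everything else is a direct transcription of the $m$-convex argument.
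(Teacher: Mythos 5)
Your proposal is correct and follows essentially the same route as the paper's own proof: the same $(\alpha_i,m_i)$-convexity bounds at the points $a$ and $b/m_i$, the same substitution reducing the integral to $(b-a)\int_0^1 f(ta+(1-t)b)g(ta+(1-t)b)\,dt$, the same use of $cd\leq\tfrac12(c^2+d^2)$, and the same moment integrals (all three of which you evaluate correctly, matching the coefficients in $E_1$ and $E_2$). The paper simply says ``analogously'' for the $E_2$ bound, which is exactly your mirror substitution with the roles of $a$ and $b$ interchanged.
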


\begin{proof}
Since $f$ is $\left( \alpha _{1},m_{1}\right) -$convex function and $g$ is $%
\left( \alpha _{2},m_{2}\right) -$convex function, then we have 
\begin{equation}
f\left( ta+\left( 1-t\right) b\right) \leq t^{\alpha _{1}}f\left( a\right)
+m_{1}\left( 1-t^{\alpha _{1}}\right) f\left( \frac{b}{m_{1}}\right)
\label{E29}
\end{equation}%
and%
\begin{equation}
g\left( ta+\left( 1-t\right) b\right) \leq t^{\alpha _{2}}g\left( a\right)
+m_{2}\left( 1-t^{\alpha _{2}}\right) g\left( \frac{b}{m_{2}}\right)
\label{E30}
\end{equation}%
for all $t\in \left[ 0,1\right] $. It is easy to observe that%
\begin{equation}
\int_{a}^{b}f\left( x\right) g\left( x\right) dx=\left( b-a\right)
\int_{0}^{1}f\left( ta+\left( 1-t\right) b\right) g\left( ta+\left(
1-t\right) b\right) dt.  \label{E31}
\end{equation}%
Using the elementary inequality $cd\leq \frac{1}{2}\left( c^{2}+d^{2}\right)
\;$($c,d\geq 0$ reals), (\ref{E29}) and (\ref{E30}) on the right side of (%
\ref{E31}) and making the charge of variable and since $f,g$ is
non-increasing, we have%
\begin{eqnarray}
&&\int_{a}^{b}f\left( x\right) g\left( x\right) dx  \label{E32} \\
&\leq &\dfrac{1}{2}\left( b-a\right) \int_{0}^{1}\left[ \left\{ f\left(
ta+\left( 1-t\right) b\right) \right\} ^{2}+\left\{ g\left( ta+\left(
1-t\right) b\right) \right\} ^{2}\right] dt  \notag \\
&\leq &\dfrac{1}{2}\left( b-a\right) \int_{0}^{1}\left[ \left( t^{\alpha
_{1}}f\left( a\right) +m_{1}\left( 1-t^{\alpha _{1}}\right) f\left( \dfrac{b%
}{m_{1}}\right) \right) ^{2}\right.  \notag \\
&&+\left. \left( t^{\alpha _{2}}g\left( a\right) +m_{2}\left( 1-t^{\alpha
_{2}}\right) g\left( \dfrac{b}{m_{2}}\right) \right) ^{2}\right] dt  \notag
\end{eqnarray}%
\begin{eqnarray*}
&=&\dfrac{1}{2}\left( b-a\right) \left[ \dfrac{1}{2\alpha _{1}+1}f^{2}\left(
a\right) \right. \\
&&+\dfrac{2\alpha _{1}^{2}}{\left( \alpha _{1}+1\right) \left( 2\alpha
_{1}+1\right) }m_{1}^{2}f^{2}\left( \dfrac{b}{m_{1}}\right) \\
&&+\dfrac{2\alpha _{1}}{\left( \alpha _{1}+1\right) \left( 2\alpha
_{1}+1\right) }m_{1}f\left( a\right) f\left( \dfrac{b}{m_{1}}\right) +\dfrac{%
1}{2\alpha _{2}+1}g^{2}\left( a\right) \\
&&+\dfrac{2\alpha _{2}^{2}}{\left( \alpha _{2}+1\right) \left( 2\alpha
_{2}+1\right) }m_{2}^{2}g^{2}\left( \dfrac{b}{m_{2}}\right) \\
&&\left. +\dfrac{2\alpha _{2}}{\left( \alpha _{2}+1\right) \left( 2\alpha
_{2}+1\right) }m_{2}g\left( a\right) g\left( \dfrac{b}{m_{2}}\right) \right]
.
\end{eqnarray*}%
Analogously we obtain%
\begin{eqnarray}
&&\int_{a}^{b}f\left( x\right) g\left( x\right) dx  \label{E33} \\
&\leq &\dfrac{1}{2}\left( b-a\right) \left[ \dfrac{1}{2\alpha _{1}+1}%
f^{2}\left( b\right) \right.  \notag \\
&&+\dfrac{2\alpha _{1}^{2}}{\left( \alpha _{1}+1\right) \left( 2\alpha
_{1}+1\right) }m_{1}^{2}f^{2}\left( \dfrac{a}{m_{1}}\right)  \notag \\
&&+\dfrac{2\alpha _{1}}{\left( \alpha _{1}+1\right) \left( 2\alpha
_{1}+1\right) }m_{1}f\left( b\right) f\left( \dfrac{a}{m_{1}}\right) +\dfrac{%
1}{2\alpha _{2}+1}g^{2}\left( b\right)  \notag \\
&&+\dfrac{2\alpha _{2}^{2}}{\left( \alpha _{2}+1\right) \left( 2\alpha
_{2}+1\right) }m_{2}^{2}g^{2}\left( \dfrac{a}{m_{2}}\right)  \notag \\
&&\left. +\dfrac{2\alpha _{2}}{\left( \alpha _{2}+1\right) \left( 2\alpha
_{2}+1\right) }m_{2}g\left( b\right) g\left( \dfrac{a}{m_{2}}\right) \right]
.  \notag
\end{eqnarray}%
Rewriting (\ref{E32}) and (\ref{E33}), we get the required inequality in (%
\ref{E28}). The proof is complete.
\end{proof}

\begin{remark}
\label{r3} In Theorem \ref{t5}, if we choose $\alpha _{1}=\alpha _{2}=1$, we
obtain the inequality of Theorem \ref{t4}.
\end{remark}

\end{document}